\renewcommand{\Re}{\operatorname{Re}}
\renewcommand{\Im}{\operatorname{Im}}
\renewcommand{\max}{\operatorname{max}}
\newcommand{\s}{{\sigma}}
 \renewcommand{\a}{\alpha}
\renewcommand{\b}{\beta}
\newcommand{\e}{\epsilon}
\renewcommand{\d}{{\delta}}
\newcommand{\g}{\gamma}
\newcommand{\G}{\Gamma}
\renewcommand{\L}{\Lambda}
\renewcommand{\t}{\theta}
\newcommand{\z}{\zeta}
\newcommand{\twopartdef}[4]
{
	\left\{
		\begin{array}{ll}
			#1 & \mbox{if } #2 \\
			#3 & \mbox{if } #4
		\end{array}
	\right.
}
\theoremstyle{plain}
\newtheorem{theorem}{Theorem}[section]
\newtheorem{lemma}[theorem]{Lemma}
\newcommand{%
	
	\import{./figures/}{.pdf_tex}
}[1]{%
	
	\import{./figures/}{#1.pdf_tex}
}
\begin{document}

\title[Zeros of Dirichlet polynomials]{Zeros of Dirichlet polynomials}

\author[Arindam Roy]{Arindam Roy}

\address{Department of Mathematics and Statistics, University of North Carolina at Charlotte, 9201 University City Blvd., Charlotte, NC 28223, USA}

\email{arindam.roy@uncc.edu}
\author[Akshaa Vatwani]{Akshaa Vatwani}

\address{Department of Mathematics, 
Indian Institute of Technology Gandhinagar, 
Palaj, Gandhinagar
Gujarat 382355, India}

\email{akshaa.vatwani@iitgn.ac.in }



\subjclass[2010]{Primary 11M41; Secondary 11M26, 11N64.}
\thanks{\textit{Keywords and phrases.}  Dirichlet polynomials,  distribution of zeros, $k$-bounded  functions, $L$-functions, approximate functional equation}


\begin{abstract}
We consider a certain class of multiplicative functions $f: \mathbb N \rightarrow \mathbb C$ and study the distribution of zeros of  Dirichlet polynomials  $F_N(s)= \sum_{n\le N} f(n)n^{-s}$  corresponding to these functions.  We prove that the known non-trivial zero-free half plane for  Dirichlet polynomials associated to this class of multiplicative functions is optimal. We also  introduce a characterization of elements in this class based  on a new parameter depending on the Dirichlet series $F(s) = \sum_{n=1}^\infty f(n) n^{-s}$. In this context, we obtain non-trivial regions in which  the associated Dirichlet polynomials do have zeros. 

\end{abstract}

\maketitle

\thispagestyle{empty}

\section{Introduction}

Let $f:\mathbb{N}\to \mathbb{C}$ be an arithmetical function. 
The Dirichet series corresponding to $f$ is  given by 
\begin{align}\label{dir series}
F(s)=\sum_{n=1}^{\infty}\frac{f(n)}{n^s},
\end{align}
defined in the region of absolute convergence. Subject to certain additional conditions, such a Dirichlet series leads to an $L$-function via analytic continuation. The theory of $L$-functions in turn is intimately connected to arithmetical properties of integers in various settings.

One may also consider the finite Dirichlet series,  known as the  Dirichlet polynomial associated to $f$,  defined by 
\begin{align} \label{partial sum}
F_N(s)=\sum_{n=1}^{N}\frac{f(n)}{n^s}. 
\end{align} 
A fundamental connection between \eqref{dir series} and \eqref{partial sum} arises through the approximate functional equation. For instance, in a seminal paper  \cite{Chandrasekharan-Narasimhan},  Chandrasekharan and Narasimhan  show that if the Dirichlet series satisfies a general functional equation of the form 
\begin{align}
\Delta(s)F(s)=\Delta(\d-s)F(\d-s),
\end{align}
for some fixed  $\delta \in \mathbb R$,  $\Delta(s)=\prod_{\nu=1}^N\Gamma(\a_{\nu}s+\b_{\nu})$ with $\a_{\nu}>0$ and $\b_{\nu}$ complex, then under certain conditions,  one is led to an approximate functional equation  of the form
\begin{align}
F(s)=\sum_{n\leq x}\frac{f(n)}{n^s}+\frac{\Delta(\d-s)}{\Delta(s)}\sum_{n\leq y}\frac{f(n)}{n^{\d-s}}+R(x,y,|t|),
\end{align} 
where the term $R(x,y,|t|)$  depends upon the lengths $x,y$ of the Dirichlet polynomials and $t =\Im (s)$.  

Over the years,   number theory has  witnessed  an increasing interplay between Dirichlet polynomials and classical questions concerning $L$-functions. For instance, zero density results for $L$-functions were significantly improved by Halasz \cite{HalaszTuran-I, HalaszTuran-II} and Montgomery \cite{Montgomery-meanvalues, Montgomery-zeros} by relying on estimates on the frequency with which a certain Dirichlet polynomial could be ``large". In this context, Huxley \cite{Huxley-largevalues, Huxley-largevaluesII, Huxley-largevaluesIII}   introduced a ``reflection argument", which led to the question of how often two different Dirichlet polynomials could be simultaneously large. These methods were further developed by Jutila \cite{HuxleyJutila-largevaluesIV, Jutila-zerodens} to obtain new zero-density estimates for $L$-functions. 

Mean value theorems for Dirichlet polynomials are also of consequence towards proving mean value estimates for $L$-functions. Such techniques were first developed by K. Ramachandra  \cite{Ramachandra-fourthmean, Ramachandra-MVapplication, Ramachandra-MVapplication2} to treat fourth power means of certain $L$-functions. A number of non-trivial results have since been derived by refining and building upon these methods, for instance by Balasubramanian  \cite{BaluRamachandra-thmIngham, BaluRamachandra-thmMeurman},  Sankaranarayanan \cite{Sank-symmsquare},  Kamiya \cite{KamiyaYuichi},  among others.

Representing the Dirichlet series $F(s)$ in  terms  of the Dirichlet polynomial  has also proved instrumental towards estimating the order of  magnitude of $F(s)$ in the critical strip. 
A case of considerable importance  is when $f(n) \equiv 1$. In this case, in the half plane $\Re(s)>1$,  the Dirichlet series $F(s)$ represents the Riemann zeta function $\zeta(s)$. Understanding the growth of $\zeta(s)$ on the critical line $\Re(s)=1/2$ is a long-standing question. The Lindel\"of hypothesis predicts that 
\begin{align}
\z\left(\frac{1}{2}+it\right)\ll_{\e}|t|^{\e}, 
\end{align}
for any $\e>0$. 
Interestingly, one can reformulate this hypothesis in terms of suitable Dirichlet polynomials. In particular, Tur\'an \cite{TuranLindelof} showed that the bound 
\begin{align}
\left\lvert\sum_{n\leq N}\frac{(-1)^n}{n^{it}}\right\rvert\ll_\e N^{1/2+\e}|2+t|^{\e}
\end{align}
for any $\e>0$, is equivalent to the  Lindel\"of hypothesis. 

A famous conjecture in this context is the Riemann Hypothesis, which asserts that the nontrivial zeros of $\z(s)$ lie on the line $\Re(s)=1/2$. This hypothesis is stronger than  the Lindel\"of hypothesis, since it implies the bound 
\begin{align}
\z\left(\frac{1}{2}+it\right)\ll \exp\left(A\frac{\log t}{\log \log t}\right)
\end{align}
for some absolute constant $A$.  
Tur\'an's study of the surprising connections between properties of $\zeta(s)$ and those of  the associated Dirichlet polynomial  $\zeta_N(s)=\displaystyle\sum_{n\leq N}{n^{-s}}$ continued with his work in \cite{TuranApprox1,TuranApprox2}, where he investigated zeros of $\zeta(s)$.  
He proved that the assertion of the Riemann hypothesis implies that there exist positive constants $c_1,c_2$ such that there are no roots of $\zeta_N(s)$ in the region 
\[
\s\geq 1,  \quad c_1\leq t\leq \exp(e^{c_2\sqrt{\log N\log\log N}}), 
\] 
for all $N$ sufficiently large. In the reverse direction, he  showed that the  Riemann hypothesis is implied by the non vanishing of $\zeta_N(s)$ in the half strip  $\s\geq 1+N^{-1/2}(\log N)^3$, $\g_n\leq t\leq \g_N+e^{N^3}$ for some $\g_N\geq 0$ and all $N$ sufficiently large. In later work \cite{TuranApprox3}, he further explored these connections by weakening the hypothesis on zeros of $\zeta_N(s)$ and allowing it to fail for infinitely many $N$, so long as the number of indices $N\le x$ for which it fails is $o(\log x)$as $x\rightarrow \infty$. Going beyond such conditional  statements, in \cite{TuranApprox1}, he also showed that for $N$ sufficiently large, the Dirichlet polynomials $\zeta_N(s)$ do not vanish in the half plane 
\[
\s \ge 1+ \frac{ 2 \log \log N  }{\log N}. 
\] 
It was only in $2001$ that this zero-free region was improved by Montgomery and Vaughan \cite{MontgomeryVaughan} to 
\begin{equation}\label{MV zero free region}
\s \ge 1+  \left(\frac{4}{\pi}-1 \right) \frac{  \log \log N  }{\log N}. 
\end{equation}
Their method is general enough to be applied to completely multiplicative functions with values inside the unit disc, but fails for the more general class of multiplicative functions. 
In \cite{RoyVatwani}, the authors bridged this gap by  obtaining a zero-free region for partial sums  associated to   functions in a suitable subclass $\mathcal{C}_k$ of multiplicative functions. Let $k$ be a positive real number. We say that   $f\in \mathcal{C}_k$ if $f$ is multiplicative and  each of the series 
\begin{align}
F(s)=\sum_{n=1}^{\infty}\frac{f(n)}{n^s},\quad \log F(s)=\sum_{n=1}^{\infty}\frac{\L_f(n)}{(\log n)n^s}, \quad\text{and}\quad \frac{F'(s)}{F(s)}=\sum_{n=1}^{\infty}\frac{\L_f(n)}{n^s}\label{defck}
\end{align}
is absolutely convergent for $\Re(s)>1$ and $|\L_f(n)|\leq k\L(n)$.   
We will say that $f$ is $k$-bounded if $f \in \mathcal C_k$. This class of multiplicative functions includes most multiplicative functions studied in the literature and has been an object of active interest recently,  for instance in the work of Granville, Harper and Soundararajan \cite{GranvilleHarperSound2017}, 
as well as Drappeau, Granville and Shao \cite{GranvilleShao1, GranvilleShao2, DrappeauGranvilleShao}. 
  In \cite[Theorem 2.3]{RoyVatwani}, the authors proved that if $f\in \mathcal{C}_k$, then the corresponding Dirichlet polynomial $F_N(s)$ does not vanish in the half plane 
\begin{align}\label{new zfr}
\s\ge 1+\left(\frac{4k}{\pi}-1\right)\frac{\log \log N}{\log N},
\end{align}  
for $N$ sufficiently large. Specialization to the case $k=1$ allows us to  deduce the result \eqref{MV zero free region} of Montgomery and Vaughan.

Interestingly,  it was proved by Montgomery  \cite{MontgomeryApprox} that the  factor $\frac{4}{\pi} -1 $  appearing in \eqref{MV zero free region} is optimal. More precisely, he showed that for any positive constant $c < \frac{4}{\pi} -1$, there exists $N_0=N_0(c)$ such that if $N>N_0$, then $\zeta_N(s)$ \emph{does} have zeros in the half-plane
 \begin{align}
\s>1+c\frac{\log \log N}{\log N}. \label{zfr}
\end{align}
The question that we address in this paper is whether the constant $\frac{4k}{\pi}-1$ appearing in \eqref{new zfr} is also sharp.  Our main objective  is thus to investigate the zeros of $F_N(s)$ in the region 
\begin{align}
\s < 1+\left(\frac{4k}{\pi}-1\right)\frac{\log \log N}{\log N}.
\end{align}
While this seems a natural enough question to ask, the answer is more nuanced than one would expect.  We find that if we consider the class $\mathcal C_k$ as a whole, then the half-plane given by \eqref{new zfr} is indeed optimal. More precisely, given  $k>0$, we explicitly demonstrate a member of the class $\mathcal{C}_k$ such that for $0<c< 4k/\pi-1$,  the corresponding partial sums $F_N$ always have a zero in the half-plane $ \s>1+c\frac{\log \log N}{\log N}$ when $N$  is sufficiently large. 

While the existence of one such member of $\mathcal{C}_k$ is enough to establish optimality of the zero-free half plane \eqref{new zfr}; for a general element in $\mathcal{C}_k$ our result on zeros of its partial sums is somewhat surprising.  In the course of our proof,  we find that we need to take into account some additional pieces of information about  the Dirichlet series $F(s)$ associated to $f$.  This comes into play  in the form of a new parameter $m$, in addition to the existing parameter $k$. More precisely, we introduce a new characterization of elements of $\mathcal{C}_k$ by defining a subclass $\mathcal{C}_{k,m} \subseteq \mathcal{C}_{k}$, which we will describe shortly. Depending on the value of this parameter $m$, we are compelled to accept the possibility of elements in  $\mathcal{C}_k$, for whose partial sums  the   zero-free region \eqref{new zfr} may not be optimal.

In order to state our result precisely we first define the subclass $\mathcal{C}_{k,m}$.  Given $m >0$, we  say that  $f\in \mathcal{C}_{k,m}$ if $f\in \mathcal{C}_{k}$ and  we can write 
\begin{align}
F(s)=\frac{H(s)}{(s-1)^m},   \label{eq:pole}
\end{align}  
with  $H(s)$   analytic in 
\begin{align}\label{region}
\s>1-\frac{c_1}{(1+\log(|t|+2))^M}, 
\end{align}
for some  constants $c_1, M>0$. 
Moreover, we assume that  in the above region, $F$ satisfies the bounds  
\begin{align}
 \frac{F'(s)}{F(s)}\ll (\log (|t|+2))^B, \quad\text{and}\quad \log F(s)\ll (\log\log (|t|+2))^C, 
  \label{zerofrbdd}
\end{align}
for some $B, C>0$. It is worth noting that the inequality $m \le k$ is evident from \eqref{eq:pole} and the  definition of $\mathcal{C}_k$. 
With this setup, our main result can be stated as follows. 
\begin{theorem}\label{thm:zero}
Let $f\in \mathcal{C}_{k,m}$ and $F_N(s)$ denote the corresponding Dirichlet polynomial given by \eqref{partial sum}. Let $0<c<\frac{4m}{\pi}-1$. Then there is a constant  $N_0(c)$ such that  $F_N(s)$ has a zero in the region 
\begin{align}
\s>1+c\frac{\log\log N}{\log N}, 
\end{align} 
but does not vanish in the region 
\begin{align} \label{half plane}
\s\ge 1+\left(\frac{4k}{\pi}-1\right)\frac{\log \log N}{\log N},
\end{align} 
for all $N>N_0(c)$. 
\end{theorem}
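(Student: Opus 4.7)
Since the non-vanishing of $F_N(s)$ in the half plane \eqref{half plane} is precisely the content of \cite[Theorem~2.3]{RoyVatwani}, applicable to any $f\in\mathcal{C}_k$, the substantive task is the first assertion: exhibiting a zero of $F_N(s)$ in the region $\sigma>1+c\log\log N/\log N$ for every fixed $c<4m/\pi-1$ and all sufficiently large $N$. The plan is to adapt Montgomery's argument in \cite{MontgomeryApprox} to the class $\mathcal{C}_{k,m}$, leveraging the pole of order $m$ of $F(s)$ at $s=1$ furnished by \eqref{eq:pole}.

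The first step is to represent $F_N(s)$ via an effective Perron-type formula
\begin{align*}
F_N(s)=\frac{1}{2\pi i}\int_{(a)}F(s+w)\,\frac{N^w}{w}\,dw+E_1(s,N),
\end{align*}
with $a$ large and an explicit truncation error $E_1$. Shift the contour leftward, past the simple pole at $w=0$ (contributing $F(s)$) and the pole of $F(s+w)$ at $w=1-s$ (which has order $m$ by \eqref{eq:pole}). A Taylor expansion of $H(s+w)N^w/w$ around $w=1-s$ shows that the latter residue is a polynomial in $\log N$ of degree $m-1$ with leading term
\begin{align*}
\frac{H(1)(\log N)^{m-1}}{(m-1)!}\cdot\frac{N^{1-s}}{1-s}.
\end{align*}
Combined with a bound on the shifted integral, obtained by pushing the line into the region \eqref{region} and invoking the growth bounds \eqref{zerofrbdd} on $\log F$ and $F'/F$, this produces an asymptotic identity of the form
\begin{align*}
F_N(s)=F(s)+\frac{H(1)(\log N)^{m-1}N^{1-s}}{(m-1)!\,(1-s)}+E(s,N),
\end{align*}
where $E(s,N)$ is strictly smaller than the main residue throughout the relevant range of $s$.

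To produce a zero, substitute $F(s)=H(s)/(s-1)^m$ and equate the leading contribution to zero. After clearing denominators and introducing the variable $v=(s-1)\log N$, the equation reduces asymptotically to
\begin{align*}
v^{m-1}e^{-v}=(m-1)!\,\frac{H(s)}{H(1)}+o(1),
\end{align*}
which is a transcendental equation with a discrete infinity of complex solutions $\{v_\nu\}$ indexed by the branches of the complex logarithm. For each admissible branch $\nu$, the point $s_\nu=1+v_\nu/\log N$ serves as a candidate zero; tracking which $s_\nu$ land in the desired strip shows that the full range $c<4m/\pi-1$ is attainable, with the same extremal analysis of the argument of $v$ that produces $4/\pi-1$ in the case $m=1$ producing $4m/\pi-1$ here. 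A Rouch\'e argument on a small disc about each $s_\nu$, where the estimates show $|E(s,N)|$ to be strictly dominated by the main term, then upgrades the candidate to a genuine zero of $F_N(s)$.

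The principal obstacle is the uniform control of $E(s,N)$ in a regime where $|\Im s|$ is allowed to grow polylogarithmically in $N$ while $s$ approaches the pole of $F$ at $1$: the shifted Perron contour must be deformed inside \eqref{region} in a way that stays quantitatively away from the pole while fully exploiting both bounds in \eqref{zerofrbdd}. A secondary technical point is checking that the lower order terms in the residue expansion at $w=1-s$, which involve the Taylor coefficients of $H$ at $1$, only perturb $s_\nu$ by an amount much smaller than the radius of the disc used in Rouch\'e, so that the topological count is preserved.
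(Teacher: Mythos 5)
Your proposal omits the central device of the paper's proof and, as stated, the route you describe cannot produce the sharp constant $\tfrac{4m}{\pi}-1$.

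The paper does not work with $F_N(s)$ directly. It first introduces the completely multiplicative unimodular twist $a(n)$ determined by $a(p)=b_{\delta}\bigl(\tfrac{1}{2\pi}\log p\bigr)$ for Montgomery's sawtooth-type function $b_{\delta}$, sets $g(n)=f(n)a(n)$, and invokes Bohr's equivalence theorem to reduce the problem to locating zeros of $G_N(s)=\sum_{n\le N} g(n)n^{-s}$. The point of the twist is that the Fourier expansion $b_\delta(\theta)=\sum_j \hat b_\delta(j)e^{2\pi i j\theta}$ turns the Dirichlet series $G(s)$ into a product $\prod_j F(s-ij)^{\hat b_\delta(j)}$ (up to the harmless correction $\tilde G$), so that Perron plus a Hankel deformation of the contour produces a whole family of ``fractional-order'' singularities at $1+ij$, each contributing $N^{1-s+ij}(\log N)^{m\hat b_\delta(j)-1}$. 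The sharp constant is then the Fourier-analytic quantity $\hat b_\delta(1)-\hat b_\delta(0)\to \tfrac{4}{\pi}$ as $\delta\to 0$, which enters through the exponents of $\log N$ in the two dominant terms $M_1$ (from $r=1$) and $M_2$ (from $G(s)$ itself). None of this is visible in the untwisted $F_N(s)$.

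Your scheme skips the twist entirely. After Perron and the residue at $w=1-s$, you arrive (correctly) at the leading relation $v^{m-1}e^{-v}=(m-1)!\,H(s)/H(1)+o(1)$ with $v=(s-1)\log N$. But this equation does not have solutions with $\Re v=c\log\log N$ for $c$ up to $\tfrac{4m}{\pi}-1$. Taking moduli, it forces $(m-1)\log|v|\approx \Re v$, hence $|v|\approx (\log N)^{c/(m-1)}$; for this to be compatible at all one needs $c<m-1$, and since $m-1<\tfrac{4m}{\pi}-1$, the method at best reaches exponents strictly below the claimed threshold. In the boundary case $m=1$ the relation degenerates to $e^{-v}\approx 1$, i.e.\ $\Re v\approx 0$, giving nothing at all --- this is consistent with the well-known fact that without twisting, the zeros of $\zeta_N(s)$ that one extracts from $\zeta_N\approx \zeta(s)-N^{1-s}/(s-1)$ lie only $O(1/\log N)$ to the right of $\sigma=1$. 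Your appeal to ``the same extremal analysis of the argument of $v$ that produces $4/\pi-1$ in the case $m=1$'' is where the gap lies: in Montgomery's paper that constant does not come from analyzing $v e^{-v}=\mathrm{const}$ but from the extremal property of $\hat b_\delta(1)-\hat b_\delta(0)$ among admissible twists, which is precisely the mechanism your proposal leaves out.

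Secondarily, even granting the untwisted main-term expansion, the Rouch\'e step needs more than ``a small disc about each $s_\nu$'': the paper instead applies the argument principle to a carefully chosen rectangle $\mathcal R$ whose vertical sides sit at $1+c\tfrac{\log\log N}{\log N}$ and $1+(\tfrac{8m}{\pi}-2-c)\tfrac{\log\log N}{\log N}$, across which $|M_1/M_2|$ passes from $o(1)$ to $\gg 1$, and then perturbs by Rouch\'e. That architecture, too, presupposes the two-term structure $M_1+M_2$ coming from the twist. The Hankel-contour technology (Lemma~\ref{hankel}) is also needed to evaluate the fractional-order residue integrals $\int_{\Gamma_r}(s+w-1-ir)^{-m\hat b_\delta(r)}N^w\,dw$, which do not arise in your integer-order residue calculus.
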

Let us note  that the first part of this theorem is vacuous if $m \le \pi/4$. Hence, we may assume $m > \pi /4$ henceforth. 
We elaborate on two illustrative special cases of our result below. 
\begin{enumerate}
\item \textbf{The case $m=k$.} 
For Dirichlet polynomials  $F_N(s)$ corresponding to  the class $\mathcal{C}_{k,k}$ of multiplicative functions, Theorem \ref{thm:zero} gives us a zero-free region 
\[
\s \ge 1+  \left( \frac{4k}{\pi} -1  \right)\frac{  \log \log N }{\log N}, 
\]
for all $N$ sufficiently large, and also asserts that the value of the constant $4k/\pi -1$ above is sharp. 
Moreover, this class can be seen to be non trivial as follows. Let $k>0$ and $f(n)=d_k(n)$, where $d_k(n)$ denotes the multiplicative function defined by 
\[
d_k(p^m)=\binom{k+m-1}{m}
\] 
on prime powers  $p^m$.
Note that if $k$ is a positive integer, then $d_k(n)$ can be interpreted combinatorically as the number of ways of writing $n$ as  a product of $k$ positive integers. It is easy to see that $f\in \mathcal{C}_{k,k}$.

We have thus obtained an example of a function in $\mathcal{C}_k$ for which the zero-free half plane \eqref{half plane} corresponding to the Dirichlet polynomial is optimal. This establishes that the constant $\frac{4k}{\pi}-1$ obtained in Theorem 2.3 of  \cite{RoyVatwani} indeed cannot be improved. 

\item  \textbf{The case $m=1, k \in \mathbb{N}, k\ge 2$.}    We consider the Dedekind zeta function $\zeta_K(s)$ of a number field $K$ with degree $[K:\mathbb Q]$ equal to $k$.  For $\Re(s)>1$,  we have  
\[
\zeta_K(s) := \sum_{n=1}^\infty \frac{a(n)}{n^s}, 
\]
where $a(n)$ denotes the number of integral ideals $\mathfrak{a}$ of $K$ with norm $n$. Using standard theory of number fields, for instance  the discussion following equations $(2.2)$ and $(2.3)$ of M. Mine \cite{Mine}, it can be seen that $\zeta_K(s)$ is associated to the class $\mathcal{C}_k$. Moreover since $\zeta_K(s)$ can be  analytically continued to $\mathbb C$ except for a simple pole at $s=1$, we see that it is associated to the subclass $\mathcal{C}_{k,1}$. 
Consequently, Theorem \ref{thm:zero} yields a zero-free half plane \eqref{half plane} for the Dirichlet polynomial $\zeta_{K,N}(s)$, provided $N$ is sufficiently large.  However, the question of whether this region is optimal is left open since Theorem \ref{thm:zero} only asserts that the value of the  constant in \eqref{half plane} cannot be decreased below $\frac{4}{\pi}-1$, but gives no information about whether the value can be between $\frac{4}{\pi}-1$  and  $\frac{4k}{\pi}-1$. 
\end{enumerate}


This paper is organized as follows. In Section  \ref{lemmata}, we collect together the preliminary results that will be required later. In Section \ref{proof}, we give the proof of Theorem \ref{thm:zero}. 

\section{Lemmata} \label{lemmata}
We will use a quantative version of Perron's formula \cite[Theorem II.2.3]{tenenbaum}, which can be given as follows. 
\begin{lemma}\label{quanperr}
Let $f(n)$ be an arithmetical function, $\s_a$ be the abscissa of absolute convergence of the corresponding Dirichlet series $F(s)$. Then for any $\a>\max(0,\s_a)$, $T\geq 1$ and $x\geq 1$, we have 
\begin{align}
\sum_{n\leq x}f(n)=\frac{1}{2\pi i}\int_{\a-iT}^{\a+iT}F(s)\frac{x^s}{s}\,ds+\left(x^{\a}\sum_{n=1}^{\infty}\frac{|f(n)|}{n^{\a}(1+T|\log(x/n)|)}\right).
\end{align}
\end{lemma}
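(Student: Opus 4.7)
The plan is to reduce the claim to the classical truncated Perron discontinuous integral and apply it term by term to the Dirichlet series. The fundamental building block I would establish first is the estimate
\[
\frac{1}{2\pi i}\int_{\alpha - iT}^{\alpha + iT} \frac{y^s}{s}\, ds = \mathbf{1}_{\{y > 1\}} + O\!\left(\frac{y^\alpha}{1 + T|\log y|}\right),
\]
valid uniformly for $y > 0$, $y \ne 1$, $\alpha > 0$, and $T \ge 1$. I would prove this via a contour-deformation argument: when $y > 1$ one closes the contour to the left through a large rectangle and picks up the residue at $s = 0$, giving the main term $1$; while for $0 < y < 1$ one closes to the right and encloses no poles. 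The horizontal and far vertical segments are estimated using $|y^s / s| \le y^\alpha / |t|$, producing an error of order $y^\alpha / (T|\log y|)$. Combining this with the trivial bound $O(y^\alpha)$ obtained by estimating the original integral directly accounts for the ``$1+$'' in the denominator and ensures uniformity as $y \to 1$.

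With this in hand, I would substitute the absolutely convergent expansion $F(s) = \sum_n f(n) n^{-s}$ into the integral over the finite vertical segment and interchange sum and integration by Fubini, which is justified by absolute convergence since $\alpha > \sigma_a$. This yields
\[
\frac{1}{2\pi i}\int_{\alpha - iT}^{\alpha + iT} F(s)\frac{x^s}{s}\, ds = \sum_{n = 1}^\infty f(n) \cdot \frac{1}{2\pi i}\int_{\alpha - iT}^{\alpha + iT} \frac{(x/n)^s}{s}\, ds.
\]
Applying the discontinuous integral to each term with $y = x/n$, the main term is $f(n)$ for $n < x$ and $0$ for $n > x$. If $x$ happens to be a positive integer, the diagonal term $n = x$ contributes $f(x)/2$ from a principal-value evaluation; the remaining $f(x)/2$ needed to complete $\sum_{n \le x} f(n)$ is absorbed into the error sum, since its $n = x$ term equals $|f(x)|$ under the convention $1/(1 + T|\log 1|) = 1$.

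The remaining task is the error analysis. Each term contributes $O(|f(n)|(x/n)^\alpha / (1 + T|\log(x/n)|))$, and summing over $n$ yields precisely the stated big-$O$ expression. The step I expect to be the main obstacle is verifying uniformity of the per-term estimate for $n$ near $x$, where $|\log(x/n)|$ becomes small and the ``$1+$'' regularization is indispensable; this requires carefully interpolating between the contour estimate and the trivial bound across the transition region. Convergence of the whole error sum is then secured by the hypothesis $\alpha > \sigma_a$, since it is dominated by $F(\alpha) = \sum |f(n)|/n^\alpha$ up to the $1/(1 + T|\log(x/n)|)$ factors which are bounded by $1$.
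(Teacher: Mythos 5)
The paper does not prove this lemma; it is cited verbatim from Tenenbaum \cite[Theorem~II.2.3]{tenenbaum}, and your proof is essentially the same classical argument as the one given there: reduce to the truncated discontinuous integral $\frac{1}{2\pi i}\int_{\alpha-iT}^{\alpha+iT} y^s\,ds/s = \mathbf{1}_{\{y>1\}} + O(y^\alpha/(1+T|\log y|))$ and apply it termwise after a Fubini exchange justified by $\alpha > \sigma_a$. Your handling of the $n=x$ diagonal term (absorbing $f(x)/2$ into the error sum via $1/(1+T\cdot 0)=1$) and of the transition region near $n\approx x$ (interpolating between the contour bound and the trivial bound $O(y^\alpha)$ to obtain the ``$1+$'' regularization) are both correct and are exactly the technical points Tenenbaum addresses. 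One minor imprecision: on the horizontal segments of the shifted rectangle one must bound $|y^s/s|\le y^{\Re s}/T$ and integrate in $\Re s$ to produce $y^\alpha/(T\log y)$; the bound $y^\alpha/|t|$ you wrote holds only at $\Re s=\alpha$ and would not close the argument as stated, though the conclusion you draw from it is the correct one. Also note that the paper's display has dropped the $O(\cdot)$ around the error sum (a typographical slip); your reading of it as a big-$O$ term is the intended one.
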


Let $r$ be a positive parameter. The Hankel contour $\mathcal{H}$ is a positively oriented contour consisting of  the circle $|s|=r$ excluding the arc from 
$re^{i(\pi - \epsilon)}$ to $r e^{i (-\pi +\epsilon) }$,  and of 
the horizontal lines from  $-\infty$ to $r e^{i(\pi - \epsilon)}$ and from 
$r e^{i(-\pi + \epsilon)}$ to $-\infty$.  
We may take $\e\to 0$. Let $\mathcal{H}(X)$ denote the part of the Hankel contour in the half plane $\s>-X$.  Hankel's formula   \cite[p.~180, Corollary II.0.18]{tenenbaum} for the contour $\mathcal{H}(X)$ is given as follows. 
\begin{lemma}\label{hankel}
Let $X>1$. Then we have uniformly for $z\in \mathbb{C}$
\begin{align}
\frac{1}{2\pi i}\int_{\mathcal{H}(X)}s^{-z}e^{s}\,ds=\frac{1}{\Gamma(z)}+O\left(47^{|z|}\Gamma(1+|z|)e^{-\frac{1}{2}X}\right).
\end{align}
\end{lemma}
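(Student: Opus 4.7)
The aim is to prove the truncated Hankel formula by bounding the error incurred when the full Hankel contour is cut off at real part $-X$. The plan is to start from the classical (untruncated) Hankel representation
\[
\frac{1}{\Gamma(z)} = \frac{1}{2\pi i} \int_{\mathcal{H}} s^{-z} e^{s}\,ds,
\]
valid for every $z \in \mathbb{C}$ with $s^{-z}$ defined by the principal branch of the logarithm cut along the negative real axis, and to control the difference with the truncated integral. This difference is exactly the sum of the two half-line integrals over $s = u e^{\pm i\pi}$, $u > X$, where the Hankel contour has been deleted.

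On each of these two tails one has $|e^{s}| = e^{-u}$, and because $\arg s = \pm \pi$ on the two sides of the cut, $|s^{-z}| = u^{-\Re z} e^{\pi |\Im z|}$. Since $X > 1$ and $u \ge X$, the crude but uniform-in-$z$ bound $u^{-\Re z} \le u^{|z|}$ holds regardless of the sign of $\Re z$. Summing the contributions from the two tails and including the prefactor $1/(2\pi)$, the error is bounded by an absolute constant times
\[
e^{\pi |z|}\int_X^{\infty} u^{|z|} e^{-u}\,du.
\]

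To estimate this last integral, I would split $e^{-u} = e^{-u/2}e^{-u/2}$, use $e^{-u/2} \le e^{-X/2}$ on $[X,\infty)$, and change variable $v = u/2$ in the remaining factor. This yields
\[
\int_X^{\infty} u^{|z|} e^{-u}\,du \;\le\; 2^{|z|+1}\,e^{-X/2}\,\Gamma(1+|z|).
\]
Assembling everything gives an error of order $(2e^{\pi})^{|z|}\,\Gamma(1+|z|)\,e^{-X/2}$, and the numerical inequality $2e^{\pi} < 47$ lets the base be replaced by $47$ at the cost of an absolute constant absorbed by the $O(\,\cdot\,)$. There is no serious obstacle here; the only subtlety is maintaining uniformity in $z$ across all arguments and sizes, which is precisely why the unsharp replacement $u^{-\Re z} \le u^{|z|}$ (valid because $u \ge X > 1$) is introduced, rather than any tighter bound depending on $\Re z$.
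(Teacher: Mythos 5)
Your proof is correct: the paper does not prove this lemma itself but cites it as Corollary II.0.18 of Tenenbaum, and your argument—subtracting the truncated integral from the full Hankel representation of $1/\Gamma(z)$, bounding the two deleted tails via $|s^{-z}e^{s}|\le e^{\pi|z|}u^{|z|}e^{-u}$ for $u\ge X>1$, and estimating $\int_X^\infty u^{|z|}e^{-u}\,du\le 2^{|z|+1}e^{-X/2}\Gamma(1+|z|)$ so that the base $2e^{\pi}<47$ emerges—is exactly the standard proof of that cited result. No gaps; the uniformity in $z$ is handled correctly by the crude bound $u^{-\Re z}\le u^{|z|}$.
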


We will also need the following lemma from \cite[p.~134, Lemma 2.3]{GranvilleHarperSound2017} which  follows from the work of Shiu \cite[Theorem 1]{Shiu1980}.
\begin{lemma}  \label{lem:shiu}
Let $k$ and $\e$ be fixed positive real numbers. Let $x$ be large, and suppose $x^{\e}\leq z\leq x$ and $q\leq z^{1-\e}$. Then uniformly for all $f\in \mathcal{C}_k$ we have 
\begin{align}
\sum_{\substack{x-z\leq n\leq x\\n\equiv a (\operatorname{mod} q)}}|f(n)|\ll_{k,\e}\frac{z}{\phi(q)}\frac{1}{\log x}\exp\left(\sum_{\substack{p\leq x\\p\nmid q}}\frac{|f(p)|}{p}\right),
\end{align}
where $a (\operatorname{mod} q)$ is any reduced residue class. 
\end{lemma}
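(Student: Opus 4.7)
The non-vanishing assertion follows immediately from $\mathcal{C}_{k,m}\subseteq \mathcal{C}_k$ and \cite[Theorem 2.3]{RoyVatwani}, which is the statement \eqref{new zfr}. The substantive task is to produce zeros of $F_N(s)$ in $\sigma>1+c\log\log N/\log N$ for every $0<c<4m/\pi-1$. The plan is to derive an approximate formula for $F_N(s)$ and then to invoke a winding-number (argument-principle) argument, following the strategy of Montgomery \cite{MontgomeryApprox}.

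First I would apply Lemma \ref{quanperr} to the arithmetical function $n\mapsto f(n)/n^s$, obtaining
\[
F_N(s) \;=\; \frac{1}{2\pi i}\int_{\alpha-iT}^{\alpha+iT} F(s+w)\,\frac{N^w}{w}\,dw \;+\; R_{\mathrm{Per}}(s,N,T),
\]
for some $\alpha>0$ and truncation $T$ polynomial in $\log N$. Next, I would shift the contour leftward into the analytic region \eqref{region} of $F$, which is legitimate thanks to the bounds \eqref{zerofrbdd}; this produces two residues, a simple pole at $w=0$ contributing $F(s)$, and an order-$m$ pole at $w=1-s$ coming from $F(s+w)=H(s+w)/(s+w-1)^m$. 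Evaluating the second residue via the substitution $u=w-(1-s)$ and the rescaling $v=u\log N$, Lemma \ref{hankel} shows its leading contribution is
\[
\frac{H(1)\,N^{1-s}\,(\log N)^{m-1}}{\Gamma(m)\,(1-s)},
\]
with subdominant contributions a factor of $\log N$ smaller. Collecting everything yields the approximate identity
\[
F_N(s) \;=\; F(s) \;+\; \frac{H(1)\,N^{1-s}\,(\log N)^{m-1}}{\Gamma(m)\,(1-s)} \;+\; E(s,N),
\]
with a controllable error $E(s,N)$.

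To produce zeros I would now fix $\sigma_0=1+c\log\log N/\log N$ and consider the rectangle $R_N=[\sigma_0,\sigma_0+\Delta]\times [t_0,t_0+\tau]$ for suitably chosen $\Delta>0$, $t_0\asymp 1$, and window length $\tau\asymp 2\pi/\log N$. The correction term $M(s):=H(1)\,N^{1-s}(\log N)^{m-1}/(\Gamma(m)(1-s))$ has modulus $\asymp (\log N)^{m-1-c}/|t|$ and rotates via the factor $N^{-it}$ with angular speed $\log N$. In the chosen window, $|M(s)|$ strictly dominates $|F(s)|$ (which is at most $\exp((\log\log |t|)^{C})$ by \eqref{zerofrbdd}) as well as $|E(s,N)|$; moreover $\arg M(s)$ sweeps through a full turn as $t$ traverses $\tau$. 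A winding-number count on $\partial R_N$ then forces $F_N$ to vanish inside $R_N$.

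The main obstacle is to render the threshold $c<4m/\pi-1$ sharp rather than a weaker constant. In Montgomery's treatment of $\zeta_N$ \cite{MontgomeryApprox}, the constant $4/\pi-1$ arises from a mean-value identity that optimises the trade-off between the window length in $t$, the comparative sizes of $F$ and $M$, and the shifted-contour error. Reproducing this sharpness in the present setting requires careful bookkeeping of all subdominant residues from the order-$m$ pole (each carrying the same oscillatory factor $N^{-it}$ but weighted by different powers of $\log N$ and $1/(1-s)$); this is precisely the step at which the parameter $m$ (rather than $k$) becomes the governing quantity, reflecting that the polar structure, not the Dirichlet coefficient bound $|\Lambda_f|\le k\Lambda$, is what drives zero production.
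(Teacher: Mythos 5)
Your proposal does not address the statement at hand. The statement is Lemma~\ref{lem:shiu}: a Shiu/Brun--Titchmarsh-type upper bound for the short-interval sum $\sum_{x-z\le n\le x,\ n\equiv a \pmod q}|f(n)|$, uniform over $f\in\mathcal{C}_k$. What you have written is instead an outline of the proof of Theorem~\ref{thm:zero} (Perron's formula, contour shift into the zero-free region, a Hankel-contour evaluation of the pole at $w=1-s$, and a winding-number argument). Nothing in your text bears on the lemma: the parameters $z$, $q$ and the residue class $a$ never appear, there is no use of the pointwise bound $|f(n)|\le d_k(n)$ coming from $|\Lambda_f(n)|\le k\Lambda(n)$, and no sieve input of any kind. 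In the paper this lemma is not proved at all; it is quoted from Granville--Harper--Soundararajan \cite[Lemma 2.3]{GranvilleHarperSound2017}, which rests on Shiu's theorem \cite{Shiu1980}. A self-contained proof would require the standard Shiu machinery: factor each $n$ in the interval as $n=bc$ with $b$ its largest divisor supported on small primes, count the admissible $n$ with prescribed small part $b$ by an upper-bound sieve in the progression $a\pmod q$ (giving a saving of $1/\log x$ and the factor $z/\phi(q)$), sum over $b$ using multiplicativity of $|f|$ to produce $\exp\bigl(\sum_{p\le x,\,p\nmid q}|f(p)|/p\bigr)$, and dispose of exceptional $n$ (large smooth part or divisibility by a high prime power) via Rankin's trick. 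None of these ingredients is present in your argument, so as a proof of Lemma~\ref{lem:shiu} it is vacuous.

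A secondary remark, in case you intended to sketch Theorem~\ref{thm:zero}: even for that purpose the outline misses the mechanism that yields the sharp constant. The paper does not work with $F_N$ directly; it passes via Bohr equivalence to $g(n)=f(n)a(n)$ with $a$ built from the function $b_\delta$, so that $G^*(s)=\prod_j F(s-ij)^{\hat b_\delta(j)}$ and the zero-producing competition is between the singularities at $1$ and $1+i$ with exponents $m\hat b_\delta(0)$ and $m\hat b_\delta(1)$; the threshold $4m/\pi-1$ comes from Lemma~\ref{lem:fourier} (in the limit $\hat b_0(1)-\hat b_0(0)=4/\pi$), exactly as in Montgomery \cite{MontgomeryApprox}. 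A single residue of order $m$ at $w=1-s$, as in your sketch, cannot by itself produce that constant.
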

Let $0\leq \d\leq \frac{1}{2}$ be fixed. We let $b_\delta(\theta)$ be a periodic function of period $1$, defined by 
\begin{align}
b_{\d}(\t)=\twopartdef{ie^{i\pi\t}}{\d\leq\t\leq 1-\d}{-e^{(1-(2\d)^{-1})i\pi\t}}{-\d<\t\leq\d.}
\end{align}
Then the Fourier series of $b_{\d}(\theta)$ is an absolutely convergent series  given by 
\begin{align}
b_{\d}(\t)=\sum_{j=-\infty}^{\infty}\hat{b}_{\d}(j)e^{2\pi i j\t},\label{fsbd}
\end{align}
where 
\begin{align} \label{fourier formula}
\hat{b}_{\d}(j)=\frac{\sin 2\pi\bigg(j\d-\frac{1}{2}\d+  { \frac{1}{4}  }
	\bigg)}  {(2j-1)2\pi\left(j\d-\frac{1}{2}\d+ { \frac{1}{4}  } \right)}.
\end{align}
Moreover as seen in \cite[Lemma 1]{MontgomeryApprox}, the Fourier coefficients satisfy the following properties.
\begin{lemma} \label{lem:fourier}
Let $0\leq \d\leq \frac{1}{2}$ be fixed. Then 
\begin{enumerate}[label=(\roman*)]
	\item the Fourier coefficient $\hat{b}_{\d}(j)\in \mathbb{R}$ for every integer $j$;
	\item $\hat{b}_{\d}(j)\ll_\delta (1+j^2)^{-1}$;
	\item $\displaystyle\lim_{\d\to 0+}\hat{b}_{\d}(j)=\frac{2}{\pi(2j-1)}=\hat{b}_{0}(j)$ for every integer $j$;
	\item $\hat{b}_{\d}(1)-\hat{b}_{\d}(0)$ is strictly decreasing in $\d$;
	\item $\displaystyle\hat{b}_{0}(j)\leq \frac{2}{\pi}$ for every integer $j$.
	\end{enumerate}
\end{lemma}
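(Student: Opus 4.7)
The plan is to verify each property directly from the closed-form expression \eqref{fourier formula}. Abbreviate $A_\delta(j) := j\delta - \delta/2 + 1/4$, so that $\hat{b}_\delta(j) = \sin(2\pi A_\delta(j))/[(2j-1)\cdot 2\pi A_\delta(j)]$. Property (i) is immediate since both numerator and denominator of this expression are real. For (ii) with $\delta>0$, bound $|\sin(2\pi A_\delta(j))| \le 1$; since $|A_\delta(j)|$ grows linearly in $|j|$ with slope $\delta$, the denominator grows like $\delta j^2$, which yields $|\hat{b}_\delta(j)| \ll_\delta (1+j^2)^{-1}$. For (iii), as $\delta \to 0^+$ we have $A_\delta(j) \to 1/4$, hence $\sin(2\pi A_\delta(j)) \to 1$ and the denominator tends to $(2j-1)\pi/2$, so $\hat{b}_\delta(j) \to 2/(\pi(2j-1))$. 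Property (v) then follows by inspecting $\hat{b}_0(j) = 2/(\pi(2j-1))$: the maximum is attained at $j=1$ and equals $2/\pi$, while $\hat{b}_0(j) \le 2/(3\pi)$ for $j \ge 2$ and $\hat{b}_0(j) \le 0$ for $j \le 0$.

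The substantive work lies in (iv). Substituting $j = 1$ and $j = 0$ into \eqref{fourier formula} and simplifying gives
\[
\hat{b}_\delta(1) = \frac{\cos(\pi\delta)}{\pi/2 + \pi\delta}, \qquad \hat{b}_\delta(0) = -\frac{\cos(\pi\delta)}{\pi/2 - \pi\delta},
\]
so that
\[
g(\delta) := \hat{b}_\delta(1) - \hat{b}_\delta(0) = \frac{\cos(\pi\delta)}{\pi(1/4 - \delta^2)}.
\]
Differentiating, strict monotonicity of $g$ on $[0, 1/2]$ is equivalent to verifying that
\[
h(\delta) := 2\delta\cos(\pi\delta) - \pi\bigl(1/4 - \delta^2\bigr)\sin(\pi\delta)
\]
is strictly negative on $(0, 1/2)$, since $h$ is (up to a positive factor) the numerator of $g'(\delta)$.

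To establish this, one checks that $h(0) = h(1/2) = 0$ and computes, after the middle terms cancel,
\[
h'(\delta) = \cos(\pi\delta)\bigl[\pi^2 \delta^2 - (\pi^2/4 - 2)\bigr].
\]
Since $\pi^2/4 - 2 > 0$, the bracket vanishes at a unique $\delta^{\ast} = \sqrt{1/4 - 2/\pi^2} \in (0, 1/2)$ and is negative on $(0, \delta^{\ast})$ and positive on $(\delta^{\ast}, 1/2)$; as $\cos(\pi\delta) \ge 0$ on $[0, 1/2]$, the same sign pattern holds for $h'$. Therefore $h$ first decreases from $h(0)=0$ and then increases back to $h(1/2)=0$, forcing $h < 0$ throughout the open interval. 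The main obstacle — producing a clean proof of strict monotonicity of $g$ — is thus resolved by this sign analysis of $h'$, while the remaining properties (i), (ii), (iii), (v) reduce to direct inspection of the closed form of $\hat{b}_\delta(j)$.
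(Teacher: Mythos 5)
Your proposal is correct, and it is in fact more than the paper offers: the paper does not prove Lemma \ref{lem:fourier} at all, but simply quotes it from Montgomery \cite[Lemma 1]{MontgomeryApprox}, so your argument amounts to a self-contained verification from the closed form \eqref{fourier formula}. Your computations check out: $\hat{b}_\delta(1)=\cos(\pi\delta)/(\pi/2+\pi\delta)$ and $\hat{b}_\delta(0)=-\cos(\pi\delta)/(\pi/2-\pi\delta)$ give $g(\delta)=\cos(\pi\delta)/\bigl(\pi(1/4-\delta^2)\bigr)$, which agrees with the identity $\hat{b}_{\d}(1)-\hat{b}_{\d}(0)=\tfrac{2\cos(\pi\d)}{\pi}\cdot\tfrac{2}{1-4\d^2}$ the paper uses later in \eqref{ineq1}; the differentiation $h'(\delta)=\cos(\pi\delta)\bigl[\pi^2\delta^2-(\pi^2/4-2)\bigr]$ is right, $\pi^2/4-2>0$, and the ``decrease from $h(0)=0$, then increase back to $h(1/2)=0$'' sign analysis legitimately forces $h<0$ on $(0,1/2)$, hence $g'<0$ there and strict decrease of $g$ on $[0,1/2]$ by continuity. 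Two small points worth tightening: in (ii) the estimate ``the denominator grows like $\delta j^2$'' only holds once $|j|\gg_\delta 1$, since $A_\delta(j)=\delta(j-1/2)+1/4$ has a zero near $j\approx 1/2-1/(4\delta)$; for the finitely many remaining $j$ you should instead invoke $|\sin x/x|\le 1$ to get $|\hat{b}_\delta(j)|\le |2j-1|^{-1}\ll_\delta (1+j^2)^{-1}$ (the same sinc interpretation also handles the removable singularity of \eqref{fourier formula} at $\delta=1/2$, $j=0$). You correctly note that (ii) genuinely requires $\delta>0$, since $\hat{b}_0(j)$ decays only like $1/|j|$. None of this affects the substance: the proof is complete in the one place where real work is needed, namely (iv).
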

Let $a(n)$ be a completely multiplicative function defined by 
\begin{align}
a(p)=b_{\d}\left(\frac{1}{2\pi}\log p\right)\label{adefb}
\end{align}
for every prime number $p$. Clearly, $|a(n)|=1$. For any multiplicative function $f\in \mathcal{C}_{k,m}$, we define $g(n)=f(n)a(n)$. Consider the Dirichlet polynomial 
\begin{align}
G_N(s)=\sum_{n\leq N}\frac{g(n)}{n^s}.
\end{align}
Then using Bohr's equivalence theorem \cite[Theorems 8.12, 8.16]{Apostol}, we find that $F_N(s)$ and $G_N(s)$ take the same set of values in any half plane $\s>\s_0$. 
Consider the Dirichlet series 
\begin{align}
G(s)=\sum_{n=1}^{\infty}\frac{g(n)}{n^s}.
\end{align}
From \eqref{defck}, we see that the Dirichlet series $G(s)$ is absolutely convergent for $\Re(s)>1$. Moreover, using the notation of \eqref{defck} for $g$, we have $\Lambda_g(n)=\Lambda_f(n)a(n)$. Indeed, this follows by applying  the identity (cf. (52) of \cite{RoyVatwani})
\begin{equation}
g(n) \log n = \sum_{d|n} \Lambda_g(d)g(n/d)
\end{equation}
inductively to prime powers. The above observations allow us to conclude that $g \in \mathcal{C}_k$. 
Define 
\begin{align}
G^*(s):=\exp\left(\sum_{n=2}^{\infty}\frac{\Lambda_f(n)}{(\log n)n^{s}}b_{\d}\left(\frac{1}{2\pi}\log n\right)\right)
\end{align}
for $\Re(s)>1$. Then from \eqref{fsbd} we find
\begin{align}
G^*(s)&=\exp\left(\sum_{n=2}^{\infty}\frac{\Lambda_f(n)}{(\log n)n^{s}}\sum_{j=-\infty}^{\infty}\hat{b}_{\d}(j)n^{ij}\right)\\
&=\exp\left(\sum_{j=-\infty}^{\infty}\hat{b}_{\d}(j)\sum_{n=2}^{\infty}\frac{\Lambda_f(n)}{(\log n)n^{s-ij}}\right)\\
&=\prod_{j=-\infty}^{\infty}F(s-ij)^{\hat{b}_{\d}(j)},
\label{prodf}
\end{align} 
for $\Re(s)>1$. We expect the behaviour of $G^*(s)$ to be like that of $G(s)$. Indeed, if we define   
\begin{align}
\tilde{G}(s):=\frac{G(s)}{G^*(s)}=\exp\left(\sum_{n=2}^{\infty}\frac{\Lambda_f(n)}{(\log n)n^{s}}\left\{a(n)-b_{\d}\left(\frac{1}{2\pi}\log n\right)\right\}\right),
\label{tgs}
\end{align}
then one has the following result. 
\begin{lemma}\label{tilgb}
The function $\log \tilde{G}(s)$ is absolutely convergent in the half plane $\Re(s)\geq\frac{3}{4}$ and analytic in the half plane $\Re(s)>\frac{1}{2}$.
\end{lemma}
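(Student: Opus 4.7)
The plan is to exploit the fact that $a$ is a completely multiplicative function chosen precisely so that $a(p) = b_\delta(\tfrac{1}{2\pi}\log p)$ for every prime $p$. Writing out $\log \tilde G(s)$ as in \eqref{tgs},
\[
\log \tilde G(s) = \sum_{n=2}^{\infty}\frac{\Lambda_f(n)}{(\log n)n^{s}}\Bigl\{a(n)-b_{\d}\bigl(\tfrac{1}{2\pi}\log n\bigr)\Bigr\},
\]
I would first observe that the contribution from $n = p$ is identically zero. Hence only prime powers $n = p^j$ with $j \geq 2$ contribute. For such terms, inspection of the piecewise definition of $b_\delta$ shows that $|b_\delta(\theta)| = 1$ for every $\theta$, while $|a(n)| = 1$ since $a$ is completely multiplicative with $|a(p)| = 1$. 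Thus the bracketed factor has modulus at most $2$ on prime powers.

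Next I would invoke the $k$-boundedness hypothesis $|\Lambda_f(n)|\le k\Lambda(n)$ from \eqref{defck}, together with $\Lambda(p^j)/\log(p^j) = 1/j$, to estimate
\[
\bigl|\log \tilde G(s)\bigr|
\;\le\; 2k\sum_{p}\sum_{j\ge 2}\frac{1}{j\,p^{j\s}}
\;\le\; 2k\sum_{p}\frac{1}{p^{2\s}}\cdot\frac{1}{1-p^{-\s}}.
\]
For $\sigma \geq 3/4$ this is majorised by a constant multiple of $\sum_p p^{-3/2}$, which converges; this gives the claimed absolute convergence on $\Re(s)\ge \tfrac{3}{4}$.

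For analyticity, I would note that on any compact subset of the half plane $\Re(s) > \tfrac{1}{2}$ we have $\sigma \geq \sigma_0$ for some $\sigma_0 > \tfrac{1}{2}$, so $2\sigma_0 > 1$ and the same estimate shows the series converges uniformly there. Since each term $\Lambda_f(n)n^{-s}(\log n)^{-1}\{a(n)-b_\delta(\tfrac{1}{2\pi}\log n)\}$ is entire in $s$, the limit is analytic on $\Re(s) > \tfrac{1}{2}$ by the standard theorem on uniform limits of holomorphic functions.

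The main conceptual point is the cancellation at primes built into the definition of $a$; beyond this, the argument is a routine prime–power tail estimate, and I do not anticipate a genuine obstacle. The only subtlety worth flagging is that the bound $|b_\delta| = 1$ must be read off from the two piecewise branches in the definition of $b_\delta(\theta)$, rather than from the Fourier series \eqref{fsbd}, since the latter gives only a $\delta$-dependent bound via Lemma~\ref{lem:fourier}(ii).
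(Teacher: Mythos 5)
Your proof is correct and follows essentially the same route as the paper's: observe that the prime terms $n=p$ cancel by the definition of $a$, bound the prime-power tail using $|\Lambda_f(n)|\le k\Lambda(n)$ and $|a(n)|=|b_\delta|=1$, reducing to $\sum_p p^{-2\sigma}$, which is uniformly bounded for $\sigma\ge\tfrac34$ and locally uniformly convergent for $\sigma>\tfrac12$. You spell out the cancellation at primes and the modulus bound on $b_\delta$ more explicitly than the paper does, but the argument is identical in substance.
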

\begin{proof}
From \eqref{adefb} and the bound  $|\Lambda_f(n)|\leq k\Lambda(n)$ we have
\begin{align}
|\log\tilde{G}(s)|\leq 2k\sum_{m\geq 2}\sum_{p}\frac{1}{mp^{m\s}}\ll \sum_{p}\frac{1}{p^{2\s}}.
\end{align}
Since the final sum  above is analytic for $\Re(s)>\frac{1}{2}$ and uniformly bounded in $\Re(s)\geq \frac{3}{4}$, this completes the proof. 
\end{proof}

Let $K$ be a positive number which will be chosen later. We split the product \eqref{prodf} into two parts and write 
\begin{align}
G^*(s)=G_1(s)G_2(s),\label{g1g2}
\end{align}
where
\begin{align}
G_1(s)=\prod_{|j|\leq K}F(s-ij)^{\hat{b}_{\d}(j)}\quad\text{and}\quad G_2(s)=\prod_{|j|> K}F(s-ij)^{\hat{b}_{\d}(j)}. 
\label{defG1G2}
\end{align}
One has  the following asymptotic formula.
\begin{lemma}\label{g2estimate}
Let $f \in \mathcal{C}_{k,m}$, $F(s)$ be the associated Dirichlet series and $G_2(s)$ be given by $\eqref{defG1G2}$. Let $K> \log\log N$.  Then 
\begin{align}
G_2(s)=1+O_{\d,m}\left(\frac{\log\log N}{K}  \right), 
\end{align}
uniformly for  $\s\ge1+\frac{1}{\log N}$. 
\end{lemma}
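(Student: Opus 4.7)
The plan is to work with $\log G_2(s)$ and reduce everything to a bound on a Dirichlet series for $\log F$. Since $\Re(s)>1$, we have $F(s-ij)\neq 0$ for every $j$ (the Euler product converges absolutely), so we may unambiguously write
\begin{equation*}
\log G_2(s)=\sum_{|j|>K}\hat b_\delta(j)\log F(s-ij),
\end{equation*}
where each $\log F(s-ij)$ is interpreted via the absolutely convergent Dirichlet series in \eqref{defck}.

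Next I would substitute the series for $\log F(s-ij)$ and interchange the order of summation, which is justified by absolute convergence in $\Re(s)>1$, to obtain
\begin{equation*}
\log G_2(s)=\sum_{n\ge 2}\frac{\Lambda_f(n)}{(\log n)\,n^{s}}\sum_{|j|>K}\hat b_\delta(j)\,n^{ij}.
\end{equation*}
The inner sum is bounded uniformly in $n$ by $\sum_{|j|>K}|\hat b_\delta(j)|$, and part (ii) of Lemma \ref{lem:fourier} gives $|\hat b_\delta(j)|\ll_\delta (1+j^2)^{-1}$, so this inner sum is $O_\delta(1/K)$. For the outer sum I use $|\Lambda_f(n)|\le k\Lambda(n)$ together with the elementary estimate $\log\zeta(\sigma)\le \log(1/(\sigma-1))+O(1)\ll \log\log N$ valid for $\sigma\ge 1+\tfrac{1}{\log N}$, giving
\begin{equation*}
|\log G_2(s)|\ll_\delta \frac{1}{K}\sum_{n\ge 2}\frac{|\Lambda_f(n)|}{(\log n)\,n^{\sigma}}\le \frac{k}{K}\log\zeta(\sigma)\ll_{\delta,m}\frac{\log\log N}{K},
\end{equation*}
where the final step absorbs the fixed parameter $k$ (admissible since $m\le k$ is fixed within the class $\mathcal{C}_{k,m}$) into the implied constant.

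Finally, I pass from $\log G_2$ to $G_2$. Because the hypothesis $K>\log\log N$ forces $|\log G_2(s)|=O_{\delta,m}(1)$, the elementary inequality $|e^{z}-1|\le |z|e^{|z|}$ yields $|G_2(s)-1|\ll_{\delta,m}\log\log N/K$, which is the claimed bound. I do not anticipate a genuine obstacle here: the only point requiring a moment of care is the interchange of summation in the second step (handled by absolute convergence in $\sigma>1$) and making sure that the convergence of the double product defining $G^{*}(s)$ allows us to take logarithms termwise. If one wanted to avoid using $k$ and truly obtain a constant depending only on $\delta$ and $m$, one could instead split the $j$-sum into the $O(1)$ indices for which $s-ij$ is within unit distance of the pole at $1$, handled via the factorization \eqref{eq:pole} and the resulting $m\log(1/|s-ij-1|)\ll m\log\log N$ bound, and the remaining indices, handled via the second bound in \eqref{zerofrbdd}; the $1/j^2$ weight from Lemma \ref{lem:fourier}(ii) still drives the tail to $O(\log\log N/K)$.
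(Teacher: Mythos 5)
Your argument is correct and is essentially the paper's proof: the paper bounds $|\log G_2(s)|\le\bigl(\sup_j|\log F(s-ij)|\bigr)\sum_{|j|>K}|\hat b_\delta(j)|$ with $|\log F(s-ij)|\ll\log\log N$ for $\sigma\ge 1+1/\log N$ and $\sum_{|j|>K}|\hat b_\delta(j)|\ll_\delta 1/K$, which is precisely what your interchange-of-summation computation produces, after which both proofs exponentiate via $e^x=1+O(x)$. Your observation that the constant produced this way really depends on $k$ rather than $m$ (with $m\le k$) is fair and applies equally to the paper's own write-up, though neither this nor your suggested fix changes the substance of the estimate.
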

\begin{proof}
For  {  $\s \ge 1+\frac{1}{\log N}$},  we write 
\begin{align}
\log G_2(s) &= \sum_{|j|>K} \hat b_\delta(j) \log F(s-ij)
\\
&\ll_\delta \sum_{|j|>K} (1+j^2)^{-1} m \log  \log N 
\\
&\ll_{\delta,m}  \frac{\log\log N}{K}, 
\end{align}
using \eqref{eq:pole} as well as  $(ii)$ of  Lemma \ref{lem:fourier}. Now, using   $e^x=1+O(x)$ for $|x|<1$,  we complete the proof. 
\end{proof}

\section{Proof of Theorem \ref{thm:zero}}\label{proof} 
Since the zero-free region \eqref{half plane} follows from Theorem 2.3 of \cite{RoyVatwani}, we  devote this section to proving the former part of Theorem \ref{thm:zero}.

Let $\s  := \Re(s) \ge 1+(2/\log N)$ and $|\Im s|\leq 1/2$. Using the quantitative version of Perron's formula given by Lemma \ref{quanperr}, we write
\begin{align}
\sum_{n\leq N}\frac{g(n)}{n^s}=\frac{1}{2\pi i}\int_{\a-iT}^{\a+iT}G(s+w)\frac{N^w}{w}\,dw+O\left(\frac{N^{\a}}{T}\sum_{n=1}^{\infty}\frac{|g(n)|}{n^{\a+\s}|\log((N+1/2)/n)|}\right)
\label{p1}
\end{align}
for $\a>0$ and large $T>0$. We rewrite the above integral as follows 
\begin{align}
\frac{1}{2\pi i}\int_{\a-iT}^{\a+iT}G(s+w)\frac{N^w}{w}\,dw=\sum_{n=1}^{\infty}\frac{g(n)}{n^s}\frac{1}{2\pi i}\int_{\a-iT}^{\a+iT}\left(\frac{N}{n}\right)^w\frac{dw}{w}.\label{si}
\end{align} 
Let $\t=1-\s+1/\log N$. Then by Cauchy's residue theorem we find 
\begin{align}
\frac{1}{2\pi  i}\int_{\a-iT}^{\a+iT}\left(\frac{N}{n}\right)^w\frac{dw}{w}=1+\frac{1}{2\pi i}\int_{\t-iT}^{\t+iT}\left(\frac{N}{n}\right)^w\frac{dw}{w}+E(T),\label{rt}
\end{align}
where $E(T)$ is the contribution from the horizontal integrals. It can be shown that 
\begin{align}
E(T)\ll \frac{N^{\a}}{Tn^{\a}\log(N/n)}.\label{ee}
\end{align}
We may now move the line of integration in \eqref{p1} from $\Re(w)=\a$ to $\Re(w)=\t$. 
Combining \eqref{p1}, \eqref{si}, \eqref{rt} and \eqref{ee}, we obtain 
\begin{align}
\sum_{n\leq N}\frac{g(n)}{n^s}=G(s)+\frac{1}{2\pi i}\int_{\t-iT}^{\t+iT}G(s+w)\frac{N^w}{w}\,dw+O\left(\frac{N^{\a}}{T}\sum_{n=1}^{\infty}\frac{|g(n)|}{n^{\a+\s}|\log((N+1/2)/n)|}\right).
\label{p2}
\end{align}

Letting $0<K<T$, let us  consider the integral 
\begin{align}
\frac{1}{2\pi i}\int_{\t {+} iK}^{\t+iT}\frac{x^w}{w}\,dw.
\end{align}
If $0<x<1$, then by the residue theorem, we may replace the line of integration $[\t+iK, \t+iT]$ by two horizontal integrals $[\t+iT, b+iT]$, $[\t+iK, b+iK]$ and a vertical integral $[b+iK,b+iT]$ for some $b>0$. On the vertical line $[b+iK,b+iT]$ the integrand is $\ll x^b/b$ and so the integral is of the order $ Tx^b/b$. Since $x<1$, this bound tends to $0$ as $b$ tends to $\infty$. On the top horizontal line the integrand is $\ll x^\s/T$ and so the integral is of the order $x^{\t}/(T\log x)$. Similarly, on the  lower horizontal line the integral is $\ll x^{\t}/(K\log x)$. Therefore, for $0<x<1$ we find that 
\begin{align}
\frac{1}{2\pi i}\int_{\t {+} iK}^{\t+iT}\frac{x^w}{w}\,dw\ll \frac{x^{\t}}{K\log x}.
\label{impint}
\end{align}
 If $x>1$, then we move the line of integration $[\t+iK, \t+iT]$ to the left and find that \eqref{impint}  holds in this case as well.

Let $K=\exp((\log\log N)^2)$ and  choose {$T$} large enough so that $T>K$. Then by \eqref{impint} we have 
\begin{align}
\frac{1}{2\pi i}\int_{\t+iK}^{\t+iT}G(s+w)\frac{N^w}{w}\,dw&=\sum_{n=1}^{\infty}\frac{g(n)}{n^s}\frac{1}{2\pi i}\int_{\t+iK}^{\t+iT}\left(\frac{N}{n}\right)^w\frac{dw}{w}\\
&\ll \frac{N^{1-\s}}{K}\sum_{n=1}^{\infty}\frac{|g(n)|}{n^{  \t+\s}|\log((N+1/2)/n)|}.\label{p3}
\end{align}
 We note here that $f\in \mathcal{C}_k$ implies that $|g(n)|= |f(n)| \le d_k(n)$  (cf. (10) of \cite{RoyVatwani}). 
 Making use of this bound, we will estimate the expression in \eqref{p3}  by considering two cases.
When $|n-N|>N/2$,  one has  $|\log((N+1/2)/n)|\gg 1 $. In this case, we get 
\begin{align}
\sum_{|n-N|>N/2}\frac{|g(n)|}{n^{ {\t} +\s}|\log((N+1/2)/n)|}\ll \sum_{n=1}^{\infty}\frac{d_k(n)}{n^{ { \t}+\s}}\ll (\log N)^k.\label{trvialbdd}
\end{align}
For the other case, we note that the interval $|n-N|\leq N/2$ is contained in the union of  intervals of the form $|n-N|\leq N/K$ and 
$2^{j}N/K\leq |n-N|\leq 2^{j+1}N/K$ for $j=0,1, \cdots, \lfloor\log_2 K\rfloor-1$. Then 
\begin{align}
\frac{N^{1-\s}}{K}\sum_{|n-N|\leq N/2}\frac{|g(n)|}{n^{\t+\s}|\log((N+1/2)/n)|}&\ll N^{-\s}\sum_{j=0}^{\lfloor \log_2 K\rfloor}\frac{2^{\t+\s}}{2^j}\sum_{|n-N|\leq 2^j N/K}|d_k(n)|\\
&\ll \frac{N^{1-\s}}{K}(\log N)^k\log K,\label{diadicbdd}
\end{align}
where in the ultimate step we have used Lemma \ref{lem:shiu}  with $q=1$, $d_k(p)=k+1$, and the known estimate 
\begin{align}
\sum_{p\leq x}\frac{1}{p}\ll \log\log x.
\end{align} 
Plugging \eqref{p3}, \eqref{trvialbdd} and \eqref{diadicbdd} into \eqref{p2}, recalling that $K= \exp((\log \log N)^2)$, and then letting $T$ tend to $\infty$, we find that 
\begin{align} 
\sum_{n\leq N}\frac{g(n)}{n^s}=G(s)+\frac{1}{2\pi i}\int_{\t-iK}^{\t+iK}G(s+w)\frac{N^w}{w}\,dw+O\left(N^{1-\s}\exp\left(-\frac{1}{2}(\log\log N)^2\right)\right).
\label{p4}
\end{align}
Now, from \eqref{tgs}, \eqref{g1g2} and Lemma \ref{g2estimate} we have
\begin{align}
G(s+w)&=\tilde{G}(s+w)G_1(s+w)\left(1+O\left(\frac{\log\log N}{K}\right)\right)\\
&=\tilde{G}(s+w)G_1(s+w)\left(1+O\left(\exp\bigg(-\frac{1}{2}(\log\log N)^2\bigg)\right)\right). 
\label{est for G}
\end{align}
Keeping in mind  that $\Re(s+w) = 1+1/\log N$,  using 
 \eqref{defck} and \eqref{defG1G2}  we obtain  
\begin{align}
\log G_1(s+w)&\ll \sum_{|j|\leq K}\hat{b}(j)\sum_{n=2}^{\infty}\frac{|\Lambda_f(n)|}{(\log n) n^{1+1/\log N}}\\
&\ll_{\d} k\log  \z\bigg(1+\frac{1}{\log N}\bigg)  \sum_{|j|\leq K}\frac{1}{j^2+1}\\
&\ll_{\d} k\log \log N, 
\label{g1bd}
\end{align}
where for the penultimate bound, we have used $|\Lambda_f(n)| \le k \Lambda(n)$ as well as ${(ii)}$ of Lemma \ref{lem:fourier}. 
Using \eqref{g1bd} and Lemma \ref{tilgb} in \eqref{est for G},  we have 
\begin{align}
G(s+w)
&=\tilde{G}(s+w)G_1(s+w)+O\left(\exp\bigg(-\frac{1}{4}(\log\log N)^2\bigg)\right).
\label{gg1}
\end{align}
The integral in \eqref{p4} is thus given by 
\begin{align} \label{intg in G1G1}
\frac{1}{2\pi i}\int\limits_{\t-iK}^{\t+iK}\!\!G(s+w)\frac{N^w}{w}\,dw=\frac{1}{2\pi i}\int\limits_{\t-iK}^{\t+iK}\!\!\tilde{G}(s+w)G_1(s+w)\frac{N^w}{w}\,dw
+O\left(N^{1-\s}\exp\left(-\frac{1}{6}(\log\log N)^2\right)\right)
\end{align} 

In order to deal with the integral on the right hand side above, we construct the following contour $\mathcal L$ (see Figure 1 below). 
\begin{figure}[ht]
\includegraphics[scale=1]{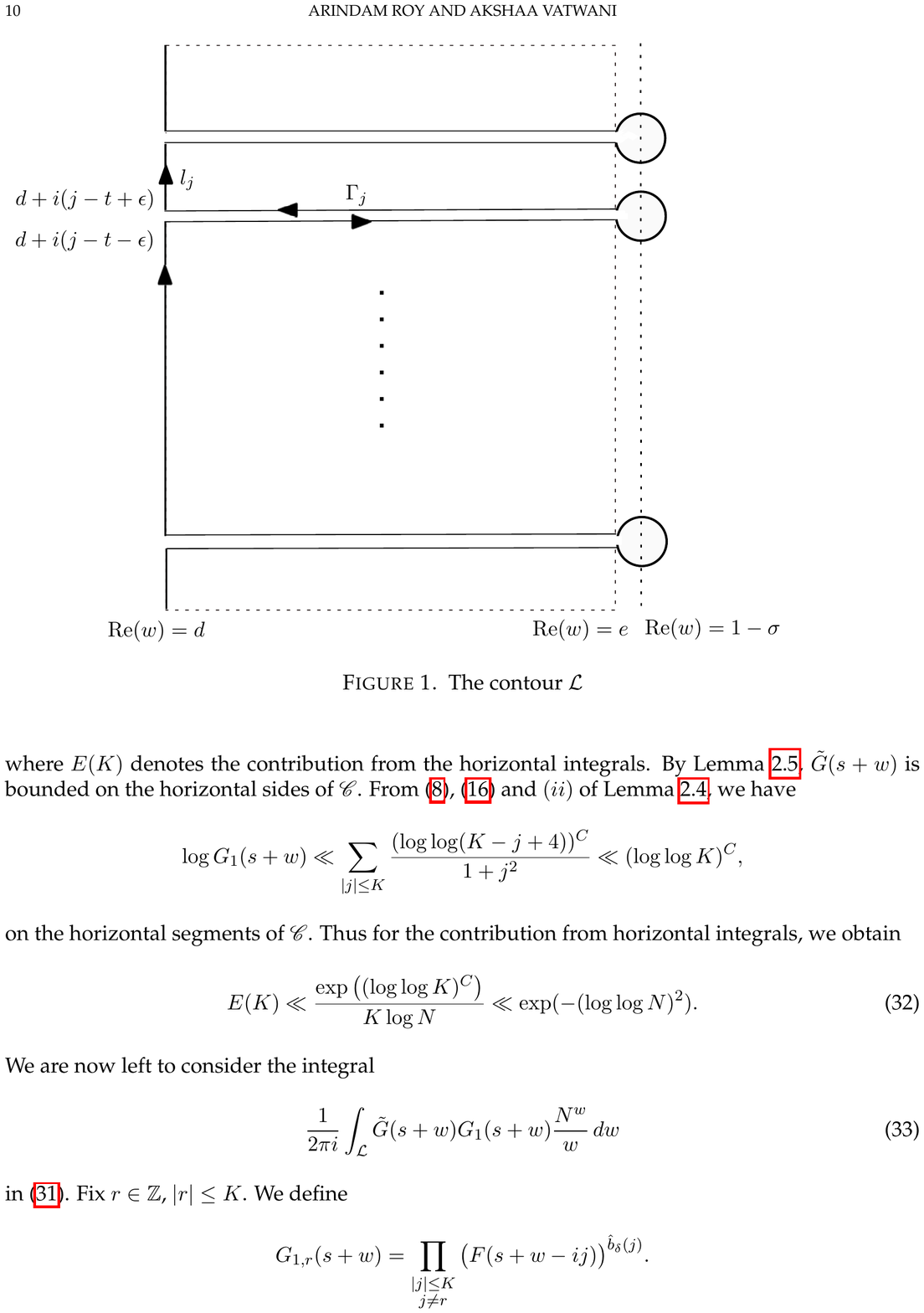}
\caption{The contour $\mathcal L$}
\end{figure}
Using the notation $s =\s+it$, let $d=1-\s-\frac{c_1}{3 (\log K)^M   }$ and $e=1-\s-\frac{1}{2\log N}$, where $c_1$ and $M$ are the constants from \eqref{region}.  Letting $j$ take integer values, we construct a  continuous curve $\mathcal{L}$ from $d-iK$ to $d+iK$ directed upward, by taking the union of line segments $l_j$  from $d+i(j-t+\e)$ to $d+i(j+1-t-\e)$, with any two consecutive line segments joined by a Hankel-type contour $\Gamma_j$ as follows.  The  contour $\Gamma_j$ consists of the horizontal line segment $[d+i(j-t-\e),e+i(j-t-\e)]$, followed by the counterclockwise circle $ |w-(1-s+ij)|=\frac{1}{2\log N}$ excluding the arc between  $e+i(j-t-\e)$ and $e+i(j-t+\e)$,  followed by  the horizontal line segment $ [e+i(j-t+\e),d+i(j-t+\e)] $. One may take $\e\to 0$. 

For $\s\geq 1+2/\log N$, the integrand $\tilde{G}(s+w)G_1(s+w)$ does not have any pole inside the positively oriented contour $\mathscr  C$  constructed from the line segments $[d-iK, \t-iK]$, $[\t-iK, \t+iK]$, $[\t+iK, d+iK]$ and the curve $-\mathcal{L}$. Therefore, by Cauchy's residue theorem, we have
\begin{align} \label{intg over L}
\frac{1}{2\pi i}\int_{\t-iK}^{\t+iK}\tilde{G}(s+w)G_1(s+w)\frac{N^w}{w}\,dw=
\frac{1}{2\pi i}\int_{\mathcal{L}}\tilde{G}(s+w)&G_1(s+w)\frac{N^w}{w}\,dw+E(K),
\end{align}
where $E(K)$ denotes the contribution from the horizontal integrals. By Lemma \ref{tilgb}, $\tilde{G}(s+w)$ is bounded on the horizontal sides of $\mathscr C$. From \eqref{zerofrbdd}, \eqref{defG1G2} and $(ii)$ of Lemma \ref{lem:fourier},  we have  
\begin{align}
\log G_1(s+w)\ll\sum_{|j|\leq K}\frac{(\log\log (K-j+4))^C}{1+j^2}\ll (\log\log K)^C, 
\end{align}
on the horizontal segments of $\mathscr C$. 
Thus for the contribution from horizontal integrals, we obtain
\begin{align} \label{Ek}
E(K)\ll \frac{\exp \big( (\log\log K)^C \big) }{K\log N}\ll \exp(-(\log\log N)^2).
\end{align}
We are now left to consider  the integral 
\begin{align} \label{intg}
\frac{1}{2\pi i}\int_{\mathcal{L}}\tilde{G}(s+w)&G_1(s+w)\frac{N^w}{w}\,dw 
\end{align}
in \eqref{intg over L}.
Fix $r \in \mathbb Z $, $|r|\le K$. 
We define 
\begin{align} \label{G_1,r} 
G_{1,r}(s+w)=\prod_{\substack{|j|\leq K\\j\neq r}} \big(F(s+w-ij) \big)^{\hat{b}_{\delta}(j)}.
\end{align}

For $\Re(s+w) \ge 1-\frac{c_1}{(\log K)^M}$, $|\Im(s+w) - r| \le 1/2$,  we have  
\begin{align}
\log G_{1,r}(s+w)&=\sum_{\substack{|j|\leq K\\j \neq r}}\hat{b}_{\delta}(j)\log F(s+w-ij)\\
&\ll \sum_{\substack{|j|\leq K\\j \neq r}} \frac{   \big(\log \log(r-j)^2+4 \big)^C}{j^2+1}\\
&\ll \big(\log\log(r^2+10) \big)^C, 
\label{g1k}
\end{align}
upon using \eqref{zerofrbdd}. 
From  \eqref{eq:pole}, we see that 
\begin{align}\label{laurent}
F(s+w-ir)\sim \frac{A}{(1-s-w+ir)^m}
\end{align}
for some constant $A$ as $\Re(s+w) \rightarrow 1^+$. 
On the vertical line segments of the contour $\mathcal L$, this gives 
\begin{align}
F(s+w-ir)\ll (\log K)^{c_0}, 
\end{align}
for some positive constant $c_0$. 
Using this along with the bound $\hat{b}_{\d}(r)\ll 1$,  Lemma \ref{tilgb} and \eqref{g1k}, the integral \eqref{intg} on the vertical line segments of $\mathcal{L}$ is 
\begin{align}
&\ll N^{1-\s-\frac{c_1}{3\log K}}\sum_{|r|\leq K}\int_{r-1}^{r+1}|F(s+d+it-ir)|^{b_\delta(r)}  \prod_{\substack{|j|\leq K\\j\neq r}}F(s+d+it-ij)|^{b_\delta(j) }   \frac{dt}{|d+it|}\\
&\ll N^{1-\s-\frac{c_1}{3\log K}}(\log K)^{O(1)} \sum_{|r|\leq K} 
\exp\left( (\log\log(r^2+10))^C\right)  \int_{r-1}^{r+1}\frac{dt}{|d+it|}\\
&\ll N^{\frac{-c_1}{3\log K}}(\log K)^{O(1)} \exp\left( (\log\log(K^2+10))^C\right) 
\sum_{|r|\leq K}  \int_{r-1}^{r+1}\frac{dt}{|d+it|} \\
&\ll \exp\{-(\log\log N)^6\}, \label{intg vert}
\end{align}
keeping in mind that $K$ was chosen to be $\exp( (\log \log N)^2)$, and $\s \ge 1+(2/\log N)$.  

We will now  estimate the integral \eqref{intg} over the remaining part of the contour $\mathcal L$, consisting of the Hankel type contours $\Gamma _r$.  We begin by estimating the integrand in a suitable way. 
Both $G_{1,r}(w)$ and $\tilde{G}(w)$ are analytic at $1-s+ir$.  Therefore as $|s+w-1-ir|\rightarrow 0$, using \eqref{laurent} one has
\begin{align} 
\!\!\!\frac{G_1(s+w)\tilde{G}(s+w)}{w}\!=\!  \frac{A_r G_{1,r}(1+ir)\tilde{G}(1+ir)}
{(1-s-w+ir)^{m\hat{b}_{\d}(r)}(1-s+ir)}\!+
O\!\left( \!\frac{|1-s+ir|^{-1} \exp\left( (\log\log(r^2+10))^C\right)} {|s+w-1-ir|^{m\hat{b}_{\d}(r)-1}}\!\right)
\label{lse}
\end{align}
for some constant $A_r$,  after using Lemma \ref{tilgb} and \eqref{g1k}. 

For each Hankel-type curve $\Gamma _r$, let us denote the circle $|w-(1-s+ir)|=1/ (2\log N)$ by $\gamma _r$, parametrized in the anticlockwise sense.   
In order to  estimate the contribution to the integral \eqref{intg} from the error term in \eqref{lse}, we consider   
\begin{align}
\int_{\G_ r}\frac{|N^w||dw|}{|s+w-1-i r|^{m\hat{b}_{\d}(r)-1}}&\ll \int_{d+i( r-t-\e)}^{e+i( r-t-\e)}
\frac{|N^w||dw|}{|s+w-1-ir|^{m\hat{b}_{\d}( r)-1}}
+
\int_{\gamma_r}  \frac{|N^w||dw|}{|s+w-1-ir|^{m\hat{b}_{\d}( r)-1}}
 \\
&\ll N^{-\s}\int_{1-\frac{c_1}{ 3(\log K)^M}-i\e}^{1-\frac{1}{2\log N}-i\e}\frac{|N^z||dz|}{|1-z|^{m\hat{b}_{\d}(r)-1}}
+ \int_{-\pi}^{\pi} \frac{N^{1-\s}  R |d\t|}{ R^{m \hat b_\d (r)-1 }   }
\end{align}  
where the final bound follows from the change of variable $z= w+s-ir$ for the first integral, and the parametrization $w =1-s+ir+ Re^{i\t}$ with $R= 1/(2\log N)$ for the second integral.  
Since the latter  is of the order $N^{1-\s}(\log N)^{m\hat{b}_{\d}(r)-2} $, upon applying the change of variable $\Re ({ 1-  } z)=  \frac{x} {  \log N }$, we obtain 
\begin{align}
\int_{\G_r}\frac{|N^w||dw|}{|s+w-1-ir|^{m\hat{b}_{\d}(r)-1}}&\ll N^{1-\s}(\log  N)^{m\hat{b}_{\d}(r)-2}\left(\int_{\frac{1}{2}}^{\frac{c_1\log N}{3(\log K)^M}}x^{1-m\hat{b}_{\d}(r)}e^{-x}\,dx+1\right)\\
&\ll_{m,r} N^{1-\s}   (\log  N)^{m\hat{b}_{\d}(r)-2}\, \Gamma\big(2-m\hat{b}_{\d}(r)\big).  
\label{error term intg}
\end{align}
To handle the contribution  to the integral \eqref{intg} from the main term in \eqref{lse}, we have 
\[\frac{1}{2\pi i}\int_{\G_r}\frac{N^w\,dw}{(s+w-1-ir)^{m\hat{b}_{\d}(r)}} = \frac{N^{1-s+ir}(\log N)^{m\hat{b}_{\d}(r)-1}}{2\pi i}\int_{\mathcal{H}\left(\frac{c_1\log N}  {3(\log K)^{ {M }}}
		\right)}z^{-m\hat{b}_{\d}(r)}e^z\,dz, 
\]
after applying the change of variable $z=(s+w-1-ir)\log N$.  Using Lemma \ref{hankel}, this integral can be written as 
\begin{align} \label{main term intg}
N^{1-s+ir}(\log N)^{m\hat{b}_{\d}(r)-1}\left(\frac{1}{\Gamma(m\hat{b}_{\d}(r))}
+O_{m,k}\left(\exp\bigg(-\frac{c_1\log N}{6(\log\log N)^ { {2M}} }\bigg)\right)\right).
\end{align}

We will now put together these estimates  as follows.  For 
 $0<c<\frac{4m}{\pi}-1$, let 
\begin{align}
\d=\frac{1}{50m}\left(\frac{4m}{\pi}-1-c\right)\label{delv} 
\end{align} 
 with $m>0$   as given in \eqref{eq:pole}.  Clearly, $\d < 4/(50\pi)$.
 For such $\delta$, using  \eqref{fourier formula}, 
  for $r\ne 1$ we have   $\hat{b}_{\d}(r)\leq \hat{b}_{\d}(1)-\frac{1}{6}$.

Putting together \eqref{lse}, \eqref{error term intg} and \eqref{main term intg}, we obtain that the contribution to \eqref{intg} from the Hankel-type contours is 
\begin{align}
\frac{1}{2\pi i}\sum_{|r|\leq K} \int_{\G_r}G_1(s+w)\tilde{G}(s+w)&\frac{N^w}{w}dw =\sum_{|r|\leq K}\frac{N^{1-s+ir}(\log N)^{m\hat{b}_{\d}(r)-1}A_rG_{1,r}(1+ir)\tilde{G}(1+ir)}{\G(m\hat{b}_{\d}(r))(1-s+ir)}\\
&+O\left( N^{1-\s}(\log N)^{m\hat{b}_{\d}(1)-2-m/6}\sum_{|r|\leq K}\frac{ \exp\left( (\log\log(r^2+10))^C\right)}{|1-s+ir|}\right)
\end{align}
The contribution from $r \neq 1$ can be absorbed into the error term above. Indeed, using Lemma \ref{tilgb} and \eqref{g1k} we have
\begin{align}
\frac{1}{2\pi i}\sum_{|r|\leq K}\int_{\G_r}G_1(s+w)\tilde{G}(s+w)&\frac{N^w}{w}dw=
\frac{N^{1-s+i}(\log N)^{m\hat{b}_{\d}(1)-1}A_1G_{1,1}(1+i)\tilde{G}(1+i)}{\G(m\hat{b}_{\d}(1))(1-s+i)}
\\
&+O\left( N^{1-\s}(\log N)^{m\hat{b}_{\d}(1)-1-m/6}\sum_{|r|\leq K}\frac{\exp\left( (\log\log(r^2+10))^C\right)}{|1-s+ir|}\right).
\end{align}
Recall that at the beginning of this section, we had assumed $\s \ge 1+ 2/ \log N$.  Let us consider now the region 
\begin{align}
 \s\geq 1+\frac{2}{\log N},  \quad\text{with} \quad |s-1|\leq \frac{1}{(\log N)^{2/3}}.
 \label{reg}
\end{align} 
We then have  
\begin{align}
\frac{1}{2\pi i}\sum_{|r|\leq K}\int_{\G_r}G_1(s+w)\tilde{G}(s+w)&\frac{N^w}{w}dw= 
\frac{N^{1-s+i}(\log N)^{m\hat{b}_{\d}(1)-1}A_1G_{1,1}(1+i)\tilde{G}(1+i)}{\G(m\hat{b}_{\d}(1))(1-s+i)}
\\
&+O\left( N^{1-\s}(\log N)^{m\hat{b}_{\d}(1)-1-m/6}  \exp\left( (\log\log(K^2+10))^{C+1}\right)
\right), 
\label{intg hankel}
\end{align}
giving the integral \eqref{intg} over the part of the contour $\mathcal{L}$ consisting of Hankel-type contours $\Gamma_r$. 
Putting together \eqref{p4}, \eqref{intg in G1G1}, \eqref{intg over L}, \eqref{Ek}, \eqref{intg vert} and \eqref{intg hankel}, we conclude that 
\begin{align}
\sum_{n\leq N}\frac{g(n)}{n^s}=G(s)+\frac{A_1N^{1-s+i}(\log N)^{m\hat{b}_{\d}(1)-1}G_{1,1}(1+i)\tilde{G}(1+i)}{\G(m\hat{b}_{\d}(1))(1-s+i)}\left(1+O((\log N)^{-m/7})\right).\label{finas}
\end{align}
Similar to our derivation of \eqref{lse},  in the region \ref{reg}, we have 
\begin{align}
G_1(s)\tilde{G}(s)=\frac{AG_{1,0}(1)\tilde{G}(1)}{(s-1)^{m\hat{b}_{\d}(0)}}+O\left(\frac{1}{|s-1|^{m\hat{b}_{\d}(0)-1}}\right), 
\end{align}
for some constant $A$. Then from Lemma \ref{tilgb}, \eqref{gg1}, \eqref{g1k} and \ref{reg}, we obtain 
\begin{align}
G(s)=\frac{AG_{1,0}(1)\tilde{G}(1)}{(s-1)^{m\hat{b}_{\d}(0)}}\left(1+O\left(\frac{1}{(\log N)^{2/3}}\right)\right).\label{gasym}
\end{align}

Let us now consider for $0<c<\frac{4m}{\pi}-1$, 
\begin{align}
\s_1=1+c\frac{\log\log N}{\log N}\quad\text{and}\quad \s_2=1+\left(\frac{8m}{\pi}-2-c\right)\frac{\log\log N}{\log N}, 
\end{align}
and let $\mathcal{R}$ be the rectangle with vertices $\s_i+it_i$, where $|t_1|\leq \frac{2\pi}{\log N}$ and $t_2=t_1+\frac{2\pi}{\log N}$.  We will proceed to show that $\sum_{n\le N} g(n)n^{-s}$ has a zero in $\mathcal{R}$.  
We write 
$$M(s)=M_1(s)+M_2(s),$$ where 
\begin{align}\label{M1M2}
M_1(s)=\frac{A_1N^{1-s+i}(\log N)^{m\hat{b}_{\d}(1)-1}G_{1,1}(1+i)\tilde{G}(1+i)}
{\G(m\hat{b}_{\d}(1))(1-s+i)}
\quad\text{and} 
\quad M_2(s)=\frac{AG_{1,0}(1)\tilde{G}(1)}{(s-1)^{m\hat{b}_{\d}(0)}}.
\end{align}
From \eqref{finas} and \eqref{gasym}, we obtain 
\begin{equation}\label{M(1+O)}
\sum_{n\le N} \frac{g(n)}{n^s} = M(s) \left(1+O\big(   (\log N)^{-m/7} \big)\right). 
\end{equation}
It is easy to see that on the right vertical side of $\mathcal{R}$, we have 
\begin{align}
\left\lvert\frac{M_1(s)}{M_2(s)}\right\rvert\ll_m \frac{|G_{1,1}(1+i)\tilde{G}(1+i)|(\log\log N)^{m\hat{b}_{\d}(0)}}{|(1-s+i)||G_{1,0}(1)\tilde{G}(1)|}(\log N)^{m(\hat{b}_{\d}(1)-\hat{b}_{\d}(0)-8/\pi)+c+1}.
\end{align} 
Moreover, using Lemma \ref{tilgb} and \eqref{g1k}, one has 
\begin{align}
1\ll \frac{|G_{1,1}(1+i)\tilde{G}(1+i)|}{|(1-s+i)||G_{1,0}(1)\tilde{G}(1)|}\ll 1\label{bsb}
\end{align}
in the region \eqref{reg}. From $(iii), (iv)$ of Lemma \ref{lem:fourier} as well as the choice of $\delta$ given by \eqref{delv}, we find that 
\begin{align}
\frac{8m}{\pi}-1-c> \frac{4m}{\pi}> m(\hat{b}_{\d}(1)-\hat{b}_{\d}(0)) { \geq \frac{4m}{\pi}-2m\pi\d^2} > \frac{4m}{\pi}-50m\d=1+c.\label{ineq1} 
\end{align}
Note that the penultimate inequality above follows by using \eqref{fourier formula} to obtain 
\begin{align}
\hat{b}_{\d}(1)-\hat{b}_{\d}(0) = \frac{2\cos (\pi \d)}{\pi }\left( \frac{2}{1-4\d^2}\right)
\end{align}
and observing that  $\cos (\pi \d) \ge 1- (\pi^2 \d^2 /2)$. 
\\
Thus, we see that on the right vertical side of the rectangle $\mathcal{R}$,
 $|M_1(s)/M_2(s)|\to 0$ as $N\to \infty$.  Writing 
 \[M(s)= M_2(s) \left(1+ O \left( \frac{|M_1(s)|}{|M_2(s)|}  \right)  \right),
 \] 
 we now consider the change in the argument of $M_2(s)$ on the right vertical side of $\mathcal{R}$. Keeping in mind  \eqref{reg} and the fact that  $\hat{b}_{\d}(0)<0$, we see from \eqref{M1M2} that change in the argument of $M_2(s)$ on the right side of $\mathcal{R}$ is very small as $N$ becomes sufficiently large. Therefore, change in the argument of $M(s)$ is also very small on the right side of $\mathcal{R}$. 
 
 Similarly, on the left vertical side of $\mathcal{R}$, we have 
\begin{align}
\left\lvert\frac{M_1(s)}{M_2(s)}\right\rvert\gg_m \frac{|G_{1,1}(1+i)\tilde{G}(1+i)|(\log\log N)^{m\hat{b}_{\d}(0)}}{|(1-s+i)||G_{1,0}(1)\tilde{G}(1)|}(\log N)^{m(\hat{b}_{\d}(1)-\hat{b}_{\d}(0))+1+c}, 
\end{align}
yielding $|M_2(s)/M_1(s)|\to 0$ as $N\to \infty$ on this side. For  $M_1(s)$, we again consider \eqref{M1M2} and note that when $N$ is sufficiently large,  change in the  argument of $1-s+i$ on $\mathcal{R}$ is very small.  On the other hand,  change in the  argument of $N^{1-s+i}$ is $2\pi$ on the left side of $\mathcal{R}$, as we go from $\s_1+it_1$ to $\s_1+it_2$. Thus we see that for $N$ sufficiently large,   change in the argument of $M(s)$ is close to $2\pi$ on the left side of $\mathcal{R}$. Arguing similarly, one finds that  change in the arguments for  $M_1(s)$ and $M_2(s)$, and hence for $M(s)$ is very small on the horizontal lines of $\mathcal{R}$. 
We may thus conclude that change in the argument of $M(s)$ along the boundary of the rectangle $\mathcal{R}$ is approximately $2\pi$. By the argument principle, this implies  that  $M(s)$ has at least one zero in the rectangle $\mathcal{R}$. Since \eqref{M(1+O)}  gives  
\begin{align}
\sum_{n\leq N}\frac{g(n)}{n^s}=M(s)+R(s), 
\end{align}
with  \begin{align}
 |R(s)|\ll |M(s)|(\log N)^{-m/7}<|M(s)|, 
 \end{align}
we may apply  Rouch\'e's theorem to conclude that $\sum_{n\leq N}\frac{g(n)}{n^s}$ has a zero in the rectangle $\mathcal{R}$. This completes the proof of the theorem.

\section*{Acknowledgments} 
The first author was partially supported by funds provided by the University of North Carolina at Charlotte. The second author was supported by the SERB-DST grant ECR/2018/001566 as well as the DST INSPIRE Faculty Award Program.     


\end{document}